\def\NZQ{\mathbb}               
\def\ZZ{{\NZQ Z}}
\def\RR{{\NZQ R}}
\def\frk{\mathfrak}               
\def\Phi{{\frk N}}
\def\ab{{\bold a}}
\def\eb{{\bold e}}
\def\xb{{\bold x}}
\def\yb{{\bold y}}
\def\zb{{\bold z}}
\def\opn#1#2{\def#1{\operatorname{#2}}} 
\opn\chara{char} 
\opn\length{\ell} 
\opn\pd{pd} 
\opn\rk{rk}
\opn\projdim{proj\,dim} 
\opn\injdim{inj\,dim} 
\opn\rank{rank}
\opn\depth{depth} 
\opn\grade{grade} 
\opn\height{height}
\opn\embdim{emb\,dim} 
\opn\codim{codim}
\opn\Tr{Tr} 
\opn\bigrank{big\,rank}
\opn\superheight{superheight}
\opn\lcm{lcm}
\opn\trdeg{tr\,deg}
\opn\reg{reg} 
\opn\lreg{lreg} 
\opn\ini{in} 
\opn\lpd{lpd}
\opn\size{size}
\opn\mult{mult}
\opn\dist{dist}
\opn\cone{cone}
\opn\lex{lex}
\opn\rev{rev}
\opn\div{div} \opn\Div{Div} \opn\cl{cl} \opn\Cl{Cl}
\opn\Spec{Spec} \opn\Supp{Supp} \opn\supp{supp} \opn\Sing{Sing}
\opn\Ass{Ass} \opn\Min{Min}
\opn\Ann{Ann} \opn\Rad{Rad} \opn\Soc{Soc}
\opn\Syz{Syz} \opn\Im{Im} \opn\Ker{Ker} \opn\Coker{Coker}
\opn\Am{Am} \opn\Hom{Hom} \opn\Tor{Tor} \opn\Ext{Ext}
\opn\End{End} \opn\Aut{Aut} \opn\id{id} \opn\ini{in}
\opn\nat{nat}
\opn\pff{pf}
\opn\Pf{Pf} \opn\GL{GL} \opn\SL{SL} \opn\mod{mod} \opn\ord{ord}
\opn\Gin{Gin}
\opn\Hilb{Hilb}\opn\adeg{adeg}\opn\std{std}\opn\ip{infpt}
\opn\Pol{Pol}
\opn\sat{sat}
\opn\Var{Var}
\opn\Gen{Gen}
\opn\aff{aff} \opn\con{conv} \opn\relint{relint} \opn\st{st}
\opn\lk{lk} \opn\cn{cn} \opn\core{core} \opn\vol{vol}
\opn\link{link} \opn\star{star}
\opn\gr{gr}
\def\Ac{{\mathcal A}}
\def\Hc{{\mathcal H}}
\def\Jc{{\mathcal J}}
\def\Fc{{\mathcal F}}
\def\Oc{{\mathcal O}}
\def\Pc{{\mathcal P}}
\def\Qc{{\mathcal Q}}
\def\Cc{{\mathcal C}}
\def\Mc{{\mathcal M}}
\def\vol{{\textnormal{vol}}}
\def\conv{{\textnormal{conv}}}
\def\pot#1#2{#1[\kern-0.28ex[#2]\kern-0.28ex]}
\opn\dirlim{\underrightarrow{\lim}}
\opn\inivlim{\underleftarrow{\lim}}
\def\Implies{\ifmmode\Longrightarrow \else
	\unskip${}\Longrightarrow{}$\ignorespaces\fi}
\def\implies{\ifmmode\Rightarrow \else
	\unskip${}\Rightarrow{}$\ignorespaces\fi}
\def\iff{\ifmmode\Longleftrightarrow \else
	\unskip${}\Longleftrightarrow{}$\ignorespaces\fi}
\newtheorem{Theorem}{Theorem}[section]
\newtheorem{Lemma}[Theorem]{Lemma}
\newtheorem{Corollary}[Theorem]{Corollary}
\newtheorem{Proposition}[Theorem]{Proposition}
\newtheorem{Remark}[Theorem]{Remark}
\newtheorem{Example}[Theorem]{Example}
\newtheorem*{acknowledgement}{Acknowledgment}
\let\epsilon\varepsilon
\let\phi=\varphi
\let\kappa=\varkappa
\def\qed{\ifhmode\textqed\fi
	\ifmmode\ifinner\quad\qedsymbol\else\dispqed\fi\fi}
\def\textqed{\unskip\nobreak\penalty50
	\hskip2em\hbox{}\nobreak\hfil\qedsymbol
	\parfillskip=0pt \finalhyphendemerits=0}
\def\dispqed{\rlap{\qquad\qedsymbol}}
\opn\dis{dis}
\opn\height{height}
\opn\dist{dist}
\def\pnt{{\raise0.5mm\hbox{\large\bf.}}}
\opn\Lex{Lex}
\opn\conv{conv}
\begin{document}
\title{volume, facets and  dual polytopes of twinned chain polytopes}
\author[A. Tsuchiya]{Akiyoshi Tsuchiya}
\address[Akiyoshi Tsuchiya]{Department of Pure and Applied Mathematics,
	Graduate School of Information Science and Technology,
	Osaka University,
	Suita, Osaka 565-0871, Japan}
\email{a-tsuchiya@ist.osaka-u.ac.jp}

\subjclass[2010]{52B05, 52B20}
\date{}
\keywords{Gorenstein Fano polytope, reflexive polytope, order polytope, chain polytope, volume, facet, dual polytope}

\begin{abstract}
    Let $(P,\leq_P)$ and $(Q,\leq_Q)$ be finite partially ordered sets with $|P|=|Q|=d$,
    and $\mathcal{C}(P) \subset \mathbb{R}^d$ and $\mathcal{C}(Q) \subset \mathbb{R}^d$
    their chain polytopes.
    The twinned chain polytope of $P$ and $Q$ is the lattice polytope $\Gamma(\mathcal{C}(P),\mathcal{C}(Q)) \subset \mathbb{R}^d$ 
    which is the convex hull of $\mathcal{C}(P) \cup (-\mathcal{C}(Q))$. 
    It is known that twinned chain polytopes are  Gorenstein Fano polytopes with the integer decomposition property.
   	In the present paper, we study combinatorial properties of twinned chain polytopes.
	First, we will give the formula of the volume of twinned chain polytopes in terms of the underlying partially ordered sets.
	Second, we will identify the facet-supporting hyperplanes of twinned chain polytopes in terms of the underlying partially ordered sets.
	Finally, we will provide the vertex representations of the dual polytopes of twinned chain polytopes.
\end{abstract} 

\maketitle
\section*{introduction}
A convex polytope $\Pc \subset \RR^{d}$ is {\em integral} if all vertices belong to $\ZZ^{d}$. 
We say that an integral convex polytope $\Pc \subset \RR^{d}$ possesses the {\em integer decomposition property} if, for each integer $N > 0$ 
and for each ${\bf a} \in N \Pc \cap \ZZ^{d}$, there exist ${\bf a}_1, \ldots, {\bf a}_N \in \Pc \cap \ZZ^{d}$ 
such that ${\bf a} = {\bf a}_1 + \cdots +{\bf a}_N$, where $N \Pc = \{ N \alpha \mid \alpha \in \Pc \}$. 
Furthermore, an integral convex polytope $\Pc \subset \RR^{d}$ is {\em Fano} if the origin of $\RR^{d}$ 
is a unique integer point belonging to the interior of $\Pc$. 
A Fano polytope $\Pc \subset \RR^{d}$ is {\em Gorenstein} 
if its dual polytope 
\[
\Pc^{\vee} := \{ {\bf x} \in \RR^{d} \mid \langle {\bf x}, {\bf y} \rangle \le 1 \  \mathrm{for\ all}  \ {\bf y} \in \Pc \}
\]
is integral as well. 
A Gorenstein Fano polytope is also known as a reflexive polytope. 
In recent years, the study of Gorenstein Fano polytopes with the integer decomposition property  has become increasingly popular.
It is known that Gorenstein Fano polytopes correspond to Gorenstein toric Fano varieties,
and they are related to mirror symmetry (see, e.g., \cite{mirror,Cox}).
Moreover, the integer decomposition property is
particularly important in the theory and application of integer
programing \cite[\S 22.10]{integer}.

A finite \textit{partially ordered set} is a finite set $P$ with a reflexive, transitive, and anti-symmetric relation $\leq_{P}$.
Let $P = \{p_1, \ldots, p_d\}$.
A {\em linear extension} of $P$ is a permutation $\sigma = i_1 i_2 \cdots i_d$ of $[d] = \{1, 2, \ldots, d\}$ 
which satisfies $a < b$ if $p_{i_a} <_{P} p_{i_b}$. 
A subset $I$ of $P$ is called a {\em poset ideal} of $P$ if $p_{i} \in I$ and $p_{j} \in P$ together with $p_{j} \leq_{P} p_{i}$ guarantee $p_{j} \in I$.  
Note that the empty set $\emptyset$ and $P$ itself are poset ideals of $P$. 
Let $\Jc(P)$ denote the set of poset ideals of $P$.
A subset $A$ of $P$ is called an {\em antichain} of $P$ if all
$p_{i}$ and $p_{j}$ belonging to $A$ with $i \neq j$ are incomparable.  
In particular, the empty set $\emptyset$ and each 1-element subset $\{p_j\}$ are antichains of $P$.
Let $\Ac(P)$ denote the set of antichains of $P$.
For each subset $I \subset P$, 
we define the $(0, 1)$-vectors $\rho(I) = \sum_{p_{i}\in I} \eb_{i}$, 
where $\eb_{1}, \ldots, \eb_{d}$ are the canonical unit coordinate vectors of $\RR^{d}$.  
In particular $\rho(\emptyset)$ is the origin ${\bf 0}$ of $\RR^{d}$. 
In \cite{Stanley}, Stanley introduced the order polytope $\Oc(P)$ and the chain polytope $\Cc(P)$. 
It is known that $\Oc(P)$ and $\Cc(P)$ are the $d$-dimensional convex polytopes defined by
\[
\Oc(P) = \text{conv}(\{ \rho(I) \mid I \in \Jc(P) \}), 
\]
\[
\Cc(P) = \text{conv}(\{ \rho(A) \mid A \in \Ac(P) \}).
\]
Moreover, $\Oc(P)$ and $\Cc(P)$ have the same volume (\cite[Theorem 4.1]{Stanley}). 
In particular, the volume of $\Oc(P)$ and $\Cc(P)$ are equal to $e(P) / d!$, 
where $e(P)$ is the number of linear extensions of $P$ (\cite[Corollary 4.2]{Stanley}).

For two integral convex polytopes $\Pc,\Qc \subset \RR^d$, we set
$$\Gamma(\Pc,\Qc)=\text{conv}(\Pc \cup (-\Qc)) \subset \RR^d.$$
Let $(P,\leq_{P})$ and $(Q,\leq_{Q})$ be finite partially ordered sets with $|P|=|Q|=d$.
Then we can construct three integral convex polytopes $\Gamma(\Oc(P),  \Oc(Q)),\Gamma(\Oc(P),  \Cc(Q))$ and $\Gamma(\Cc(P),  \Cc(Q))$.
In particular, we call  $\Gamma(\Cc(P), \Cc(Q))$  the \textit{twinned chain polytope} of $P$ and $Q$.
 It is known $\Gamma(\Cc(P),  \Cc(Q))$ and $\Gamma(\Oc(P),\Cc(Q))$ are Gorenstein Fano polytopes with the integer decomposition property (\cite{orderchain,harmony}).
Furthermore if $P$ and $Q$ have a common linear extension, $\Gamma(\Oc(P), \Oc(Q))$ is a Gorenstein Fano polytope with the integer decomposition property (\cite{twin}).
In addition, if $P$ and $Q$ have a common linear extension, these three polytopes have the same volume (\cite{volOC}).

For two integral convex polytopes $\Pc,\Qc \subset \RR^d$, we set
$$\Omega(\Pc,\Qc)=\text{conv}(\Pc \times \{1\} \cup ((-\Qc) \times \{-1\})) \subset \RR^{d+1}.$$
In \cite{HT}, Hibi and Tsuchiya study the combinatorial properties of the three integral convex polytopes $\Omega(\Oc(P),  \Oc(Q))$, $\Omega(\Oc(P),  \Cc(Q))$ and $\Omega(\Cc(P),  \Cc(Q))$.
Moreover, in \cite{double}, Chappell, Friedl and
Sanyal study the combinatorial properties of the three integral convex polytopes $\Omega(2\Oc(P),  2\Oc(Q))$, $\Omega(2\Oc(P),  2\Cc(Q))$ and $\Omega(2\Cc(P),  2\Cc(Q))$.
These polytopes have strong connections with $\Gamma(\Oc(P),  \Oc(Q)),\Gamma(\Oc(P),  \Cc(Q))$ and $\Gamma(\Cc(P),  \Cc(Q))$.
For example, we can compute the volume of $\Omega(\Cc(P),  \Cc(Q))$ by computing that of $\Gamma(\Cc(P),  \Cc(Q))$.
In addition, we note that $\Gamma(\Cc(P),  \Cc(Q))$ arises as projection of $\Omega(2\Cc(P),  2\Cc(Q))$.
Hence, it is important to study properties of $\Gamma(\Oc(P),  \Oc(Q)),\Gamma(\Oc(P),  \Cc(Q))$ and $\Gamma(\Cc(P),  \Cc(Q))$.

In the present paper, we study combinatorial properties of twinned chain polytopes.
One of the reasons why we study twinned chain polytopes is that twinned chain polytopes have a nice structure.
In fact, we will show each twinned chain polytope of dimension $d$ is the union of $d$ chain polytopes (Lemma \ref{volumelemma} and Proposition \ref{prop}). 
In section 1, we will use this proposition to give the formula of the volume of a twinned chain polytope $\Gamma(\Cc(P), \Cc(Q))$ in terms of underlying partially ordered sets (Theorem \ref{volume}).
Similarly, we can compute the volume of $\Gamma(\Oc(P), \Oc(Q))$ and $\Gamma(\Oc(P),  \Cc(Q))$ (Corollary \ref{oo}).
By using Proposition \ref{prop} again, in Section 2, we will identify the facet-supporting hyperplanes of twinned chain polytopes in terms of the underlying partially ordered sets
(Theorem \ref{facetth}).
Finally, we will provide the vertex representations of the dual polytopes of twinned chain polytopes 
(Corollary \ref{cor}).

 \section{the formula of the volume of twinned chain polytopes}
 For partially ordered sets $(P,\leq_{P})$ and $(Q,\leq_{Q})$ with $P \cap Q =\emptyset$,
 the \textit{ordinal sum} of $P$ and $Q$ is the partially ordered set $(P \oplus Q,\leq_{P \oplus Q})$ 
on $P \oplus Q=P \cup Q$ such that $s \leq_{P \oplus Q} t$ 
if (a) $s,t \in P$ and $s \leq_{P} t$,
or (b) $s,t \in Q$ and $s \leq_{Q} t$,
or (c) $s \in P$ and $t \in Q$.
Then we have $\Ac(P \oplus Q)=\Ac(P) \cup \Ac(Q)$.
Let $P = \{p_1, \ldots, p_d\}$ and $Q = \{q_1, \ldots, q_d\}$.
Given a subset $W$ of $[d]$, we define the \textit{induced subposet} of $P$ on $W$
to be the partially ordered set $(P_W, \leq_{P_W})$ on $P_W=\{p_i \mid i \in W \}$
such that  $p_i \leq_{P_W} p_j$ if and only if $p_i \leq_{P} p_j$. 
For $W \subset [d]$,
we let $(\Delta_W(P,Q),\leq_{W})$ be the ordinal sum of $P_W$ and $Q_{\overline{W}}$, where $\overline{W}=[d] \setminus W$.
Note that
$|\Delta_{W}(P,Q)|=d$ 
and we have $\Ac(\Delta_W(P,Q))=\Ac(P_W) \cup \Ac(Q_{\overline{W}})$.
Let $W=\{i_1,\ldots,i_k\} \subset [d]$ and $\overline{W}=\{i_{k+1},\ldots,i_d\} \subset [d]$
with $W \cup \overline{W}=[d]$.
Then we have $\Delta_W(P,Q)=\{p_{i_1},\ldots,p_{i_k},q_{i_{k+1}},\ldots,q_{i_d}  \}$.
Also, we let $(R,\leq_R)$ be the partially ordered set on $R=\{r_1,\ldots,r_d\}$ such that $r_i \leq_R r_j$ 
if (a)  $i,j \in W$ and $p_i \leq_{W} p_j$,
or (b) $i,j \in \overline{W}$ and  $q_i \leq_{W} q_j$,
or (c) $i \in W,j \in \overline{W}$ and $p_i \leq_{W} q_j$.
We call a permutation $\sigma = i_1 i_2 \cdots i_d$ of $[d]$ a linear extension of $\Delta_W(P,Q)$,
if $\sigma$ is a linear extension of $R$,
and we write $e(\Delta_W(P,Q))$ for the number of linear extensions of $\Delta_W(P,Q)$,
i.e., $e(\Delta_W(P,Q))=e(R)$.
For $A \subset \Delta_W(P,Q)$,
we define the $(-1, 0, 1)$-vectors $\rho'(A) = \sum_{p_{i}\in A} \eb_{i}-\sum_{q_{j}\in A} \eb_{j}$
and we set $$\Cc'(\Delta_W(P,Q))=\text{conv}(\{ \rho'(A) \mid A \in \Ac(\Delta_W(P,Q)) \}).$$ 

Now, we recall properties of integral convex polytopes.
Let $\ZZ^{d \times d}$ denote the set of $d \times d$ integral matrices.
A matrix $A \in \ZZ^{d \times d}$ is {\em unimodular} if $\det (A) = \pm 1$.
Given integral convex polytopes $\Pc$ and $\Qc$ in $\RR^d$ of dimension $d$,
we say that $\Pc$ and $\Qc$ are {\em unimodularly equivalent}
if there exists a unimodular matrix $U \in \ZZ^{d \times d}$
and an integral vector ${\bf w}$, such that $\Qc=f_U(\Pc)+{\bf w}$,
where $f_U$ is the linear transformation in $\RR^d$ defined by $U$,
i.e., $f_U({\bf v}) = {\bf v} U$ for all ${\bf v} \in \RR^d$.
Clearly, if $\Pc$ and $\Qc$ are unimodularly equivalent, then
$\text{vol}(\Pc) = \text{vol}(\Qc)$, where $\text{vol}(\cdot)$ denotes the usual volume.

First, we will show the following lemma.
\begin{Lemma}
	\label{volumelemma}
	 Work with the same situation as above.
	 Then $\Cc'(\Delta_W(P,Q))$ and $\Cc(R)$ are unimodularly equivalent.
	  Moreover we have $$\textnormal{vol}(\Cc'(\Delta_W(P,Q)))=e(\Delta_W(P,Q))/d!.$$
\end{Lemma}
\begin{proof}
 Let $U=(u_{ij})_{1 \leq i,j \leq d} \in \ZZ^{d \times d}$ be a unimodular matrix such that 
	\begin{displaymath}
	u_{ij}=\left\{
	\begin{aligned}
	1, \ & \text{if } i=j \ \text{and} \  i \in W,\\
	-1, \ &\text{if } i=j \ \text{and} \  i \in \overline{W},\\
	0, \ & \text{if } i \neq j. 
	\end{aligned}
	\right.
	\end{displaymath}	
	Then $\Cc'(\Delta_W(P,Q)) = f_U(\Cc(R))$,
	where $(R,\leq_R)$ is the partially ordered set defined by the above.
	This says that $\Cc'(\Delta_W(P,Q))$ and $\Cc(R)$ are unimodularly equivalent.
	Hence since the volume of $\Cc(R)$ is equal to $e(R)/d!$,
	 We have 
$$
 \text{vol}(\Cc'(\Delta_W(P,Q)))=\text{vol}(\Cc(R))
	 =e(R)/d!
	 =e(\Delta_W(P,Q))/d!,
$$
	as desired.
\end{proof}

Let $\Pc \subset \RR^d$ be a integral convex polytope.
Then we write $V(\Pc)$ for the vertex set of $\Pc$,
and for $W \subset [d]$, we set
$$\Pc_W=\{(x_1,\ldots,x_d) \in \Pc \mid \text{if} \ i \in W, x_i \geq 0 \ \text{and if } j \in \overline{W}, x_j \leq 0 \},$$
$$V_W(\Pc)=\{(x_1,\ldots,x_d)\in V(\Pc) \mid \text{if} \ i \in W,\  x_i \geq 0 \ \text{and if} \ j \in \overline{W},\  x_j \leq 0 \}.$$

The following is the key proposition in this paper. 
\begin{Proposition}
	\label{prop}
		Let $(P,\leq_{P})$ and $(Q,\leq_{Q})$ be partially ordered sets on $P=\{p_1, \ldots, p_d\}$ and $Q = \{q_1, \ldots, q_d\}$.
		Then we have 
		$$\Gamma(\Cc(P),  \Cc(Q)) = \bigcup_{W \subset [d]}\Cc'(\Delta_W(P,Q)).$$
		In particular, for any subset $W \subset [d]$, we have 
		$$\Gamma(\Cc(P),  \Cc(Q))_W=\Cc'(\Delta_W(P,Q)).$$
\end{Proposition}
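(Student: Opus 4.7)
The plan is to prove the ``in particular'' statement $\Gamma(\Cc(P),-\Cc(Q))_W = \Cc'(\Delta_W(P,Q))$ for each $W \subset [d]$; the displayed global equality will follow at once, since every point of $\RR^d$ lies in some $\Gamma(\Cc(P),-\Cc(Q))_W$ according to the sign pattern of its coordinates, giving $\Gamma(\Cc(P),-\Cc(Q)) = \bigcup_{W \subset [d]} \Gamma(\Cc(P),-\Cc(Q))_W$.

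For the inclusion $\Cc'(\Delta_W(P,Q)) \subset \Gamma(\Cc(P),-\Cc(Q))_W$, I would check that each vertex $\rho'(A)$ of $\Cc'(\Delta_W(P,Q))$ already lies in $\Gamma(\Cc(P),-\Cc(Q))_W$. Using $\Ac(\Delta_W(P,Q)) = \Ac(P_W) \cup \Ac(Q_{\overline{W}})$, there are two cases: if $A \in \Ac(P_W)$ then $A$ is also an antichain of $P$ (since $P_W$ is an induced subposet of $P$), so $\rho'(A) = \rho(A)$ is a vertex of $\Cc(P)$ supported on $W$ with nonnegative entries; symmetrically, if $A \in \Ac(Q_{\overline{W}})$, then $\rho'(A) = -\rho(A)$ is a vertex of $-\Cc(Q)$ supported on $\overline{W}$ with nonpositive entries. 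Convexity of $\Gamma(\Cc(P),-\Cc(Q))_W$ then completes this direction.

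For the reverse inclusion, the key observation is that $\Cc(P_W)$ is \emph{down-closed} in $\RR^{W}_{\geq 0}$: if $0 \leq a \leq b$ coordinatewise and $b \in \Cc(P_W)$, then $a \in \Cc(P_W)$, because the defining chain inequalities $\sum_{p_i \in C} x_i \leq 1$ are monotone in the nonnegative coordinates. Given $x \in \Gamma(\Cc(P),-\Cc(Q))_W$, write $x = \alpha y - \beta z$ with $y \in \Cc(P)$, $z \in \Cc(Q)$, $\alpha,\beta \geq 0$, $\alpha + \beta = 1$, and split $x = x^+ - x^-$ where $x^+_i = x_i$ for $i \in W$ (zero elsewhere) and $x^-_j = -x_j$ for $j \in \overline{W}$ (zero elsewhere). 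A coordinatewise check gives $0 \leq x^+ \leq \alpha y$ and $0 \leq x^- \leq \beta z$. Since any chain of $P_W$ is a chain of $P$, the restriction $y|_W$ lies in $\Cc(P_W)$; down-closedness then yields $x^+|_W \in \alpha \Cc(P_W)$, and symmetrically $x^-|_{\overline{W}} \in \beta \Cc(Q_{\overline{W}})$. Expanding each as a convex combination of antichain vertices and combining gives $x = \sum_{A \in \Ac(P_W)} \alpha_A \rho'(A) + \sum_{B \in \Ac(Q_{\overline{W}})} \beta_B \rho'(B)$ with total weight $\alpha + \beta = 1$, exhibiting $x \in \Cc'(\Delta_W(P,Q))$.

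The only real subtlety is the bookkeeping between vectors in $\RR^d$, $\RR^W$, and $\RR^{\overline{W}}$; the heart of the argument is the down-closedness of chain polytopes, which lets a coordinatewise bound $x^+ \leq \alpha y$ be absorbed into genuine membership in the scaled polytope $\alpha\, \Cc(P_W)$.
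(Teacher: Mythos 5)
Your proof is correct, but it follows a genuinely different route from the paper. You prove the orthant-wise identity $\Gamma(\Cc(P),-\Cc(Q))_W=\Cc'(\Delta_W(P,Q))$ first, for an \emph{arbitrary} point $x=\alpha y-\beta z$ of the convex hull, by splitting $x$ according to the sign pattern $W$ and invoking the down-closedness (antiblocking) property of chain polytopes together with the fact that chains of an induced subposet are chains of the ambient poset; the global equality then falls out since every point of $\Gamma(\Cc(P),-\Cc(Q))$ lies in some $\Gamma(\Cc(P),-\Cc(Q))_W$. The paper argues in the opposite direction: it first checks that the vertices of $\Gamma(\Cc(P),-\Cc(Q))$ with sign pattern $W$ are exactly the nonzero vertices of $\Cc'(\Delta_W(P,Q))$, and then shows the union $\bigcup_W\Cc'(\Delta_W(P,Q))$ is convex by an explicit combinatorial identity: for a vertex $\rho(A_1)$ of $\Cc(P)$ and a vertex $-\rho(A_2)$ of $-\Cc(Q)$ with $a\ge b$, it rewrites $a\rho(A_1)-b\rho(A_2)=c\,\rho'(A_1')+b\,\rho'(A_2')+b\,\rho'(A_3')$ with $c=a-b$ and a cleverly chosen $W$, so the segment lands inside a single $\Cc'(\Delta_W(P,Q))$. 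What your approach buys is that it works pointwise (you never need to reduce to convex combinations of two vertices or to argue convexity of a union of polytopes), and it yields the ``in particular'' statement directly rather than as an afterthought; note that your use of Stanley's inequality description of $\Cc(P)$ (nonnegativity plus chain inequalities) is exactly what makes the down-closedness and restriction steps legitimate, and the paper recalls this description only in Section 2. What the paper's computation buys is an explicit certificate, in terms of antichains of $\Delta_W(P,Q)$, of how a mixed convex combination is absorbed into one ordinal-sum chain polytope. The only points worth making explicit in your write-up are the standard description $\conv(\Cc(P)\cup(-\Cc(Q)))=\{\alpha y-\beta z \mid y\in\Cc(P),\ z\in\Cc(Q),\ \alpha,\beta\ge 0,\ \alpha+\beta=1\}$ and the degenerate cases $\alpha=0$ or $\beta=0$ (where the corresponding piece is the origin, i.e.\ the empty antichain); both are immediate.
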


\begin{proof}
	For any $W \subset [d]$, we have 
	$$V_W(\Gamma(\Cc(P),  \Cc(Q)))=V(\Cc'(\Delta_W(P,Q)))\setminus\{(0,\ldots,0)\}$$
	since $\Ac(\Delta_W(P,Q))=\Ac(P_W)\cup \Ac(Q_{\overline{W}})$.
	Hence it follows that $$\Gamma(\Cc(P),  \Cc(Q))_W \supset \text{conv}(V_W(\Gamma(\Cc(P),  \Cc(Q))) \cup \{(0,\ldots,0)\})= \Cc'(\Delta_W(P,Q)).$$
	Moreover, we obtain $$\Gamma(\Cc(P),  \Cc(Q)) \supset \bigcup_{W \subset [d]}\Cc'(\Delta_W(P,Q)).$$
	
	We will show that for any $\xb,\yb \in V(\Gamma(\Cc(P),  \Cc(Q)))$ and $a,b \in \RR$ with $a+b=1,a \geq 0$ and $b \geq 0$, there exists $W \subset [d]$ such that $a\xb+b\yb \in \Cc'(\Delta_W(P,Q)) $.
	This shows that $\bigcup_{W \subset [d]}\Cc'(\Delta_W(P,Q))$ contains any edge of $\Gamma(\Cc(P),  \Cc(Q))$, hence, we have $$\Gamma(\Cc(P),  \Cc(Q)) \subset \bigcup_{W \subset [d]}\Cc'(\Delta_W(P,Q)).$$
	When $\xb,\yb \in \Cc(P)$ or $\xb,\yb \in (-\Cc(Q))$, it clearly follows.
	Let $$A_1=\{p_{i_1},\ldots,p_{i_{\ell}},p_{i_{\ell+1}},\ldots,p_{i_m} \}$$
	and
	$$A_2=\{q_{i_1},\ldots,q_{i_{\ell}},q_{i_{m+1}},\ldots,q_{i_n} \}$$
	be antichains of $\Ac(P)$ and $\Ac(Q)$, and we set $\xb=\rho(A_1)$ and $\yb=-\rho(A_2)$.
	We should show the case $a \geq b$.
	Let $W=\{i_{1},\ldots,i_m\} \subset [d]$ and $c=a-b$.
	Then $A'_1=\{p_{i_1},\ldots,p_{i_m} \}$, $A'_2=\{p_{i_{\ell+1}},\ldots,p_{i_m}\}$ and $A'_3=\{q_{i_{m+1}},\ldots, q_{i_n} \}$ are antichains of $\Delta_W(P,Q)$.
	We set $\xb'=\rho'(A'_1),\yb'=\rho'(A'_2)$ and $\zb'=\rho'(A'_3)$.
	Then we have $a\xb+b\yb=c\xb'+b\yb'+b\zb'$ and $c+2b=1$.
	Hence $a\xb+b\yb \in \Cc'(\Delta_W(P,Q)) $.
	
	Therefore, 
	we have
$$\Gamma(\Cc(P),  \Cc(Q)) = \bigcup_{W \subset [d]}\Cc'(\Delta_W(P,Q)),$$
	In particular,
		$$\Gamma(\Cc(P),  \Cc(Q))_W=\Cc'(\Delta_W(P,Q)).$$
		as desired.
\end{proof}

In this section, we give the formula of the volume of $\Gamma(\Cc(P), \Cc(Q))$ in terms of partially ordered sets $(P,\leq_P)$ and $(Q,\leq_Q)$.
In particular, the following theorem is immediate given Lemma \ref{volumelemma} and Proposition \ref{prop}.
\begin{Theorem}
	\label{volume}
	Let $(P,\leq_{P})$ and $(Q,\leq_{Q})$ be partially ordered sets on $P=\{p_1, \ldots, p_d\}$ and $Q = \{q_1, \ldots, q_d\}$.
Then we have
$$\textnormal{vol}(\Gamma(\Cc(P),  \Cc(Q)))=\sum\limits_{W \subset [d]}\cfrac{e(\Delta_W(P,Q))}{d!}.$$
\end{Theorem}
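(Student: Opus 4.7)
The plan is to combine Proposition \ref{prop} with Lemma \ref{volumelemma} via an orthant decomposition argument, so that the volume of $\Gamma(\Cc(P),-\Cc(Q))$ is simply the sum of the volumes of the $2^d$ pieces $\Cc'(\Delta_W(P,Q))$.

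First I would invoke Proposition \ref{prop}, which already tells us that
$$\Gamma(\Cc(P),-\Cc(Q)) = \bigcup_{W \subset [d]} \Cc'(\Delta_W(P,Q)),$$
and moreover that $\Cc'(\Delta_W(P,Q)) = \Gamma(\Cc(P),-\Cc(Q))_W$ sits inside the closed orthant
$$O_W = \{(x_1,\ldots,x_d) \in \RR^d \mid x_i \geq 0 \text{ for } i \in W,\ x_j \leq 0 \text{ for } j \in \overline{W}\}.$$
This is the structural input that the proof relies on.

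Next I would observe that for any two distinct subsets $W, W' \subset [d]$, the intersection $O_W \cap O_{W'}$ is contained in a union of coordinate hyperplanes $\{x_k = 0\}$ (pick any $k$ in the symmetric difference of $W$ and $W'$), and therefore has $d$-dimensional Lebesgue measure zero. Since $\Cc'(\Delta_W(P,Q)) \subset O_W$, the pieces have pairwise $d$-dimensional measure-zero overlap. Therefore the volume of a finite union equals the sum of volumes:
$$\textnormal{vol}(\Gamma(\Cc(P),-\Cc(Q))) = \sum_{W \subset [d]} \textnormal{vol}(\Cc'(\Delta_W(P,Q))).$$

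Finally I would apply Lemma \ref{volumelemma}, which identifies each summand as $e(\Delta_W(P,Q))/d!$, giving the desired formula. There is no real obstacle here: the only substantive content is the orthant-disjointness observation, and that is essentially built into the definition of $\Pc_W$. All the serious combinatorial work has already been done in Proposition \ref{prop} (decomposition) and in Lemma \ref{volumelemma} (unimodular equivalence with an honest chain polytope $\Cc(R)$ whose volume is counted by linear extensions).
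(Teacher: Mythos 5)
Your proposal is correct and follows exactly the paper's route: the paper proves Theorem \ref{volume} by combining Proposition \ref{prop} with Lemma \ref{volumelemma}, stating that the result ``immediately follows.'' The only difference is that you spell out the implicit step that distinct pieces $\Cc'(\Delta_W(P,Q))$ lie in different closed orthants and hence overlap only in a set of measure zero, which the paper leaves unstated.
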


\begin{Remark}
In \cite{volOC}, it is shown that $\Gamma(\Cc(P),  \Cc(Q))$ has a regular unimodular triangulation.
Therefore, $d!\cdot\textnormal{vol}(\Gamma(\Cc(P),  \Cc(Q)))$ equals the number of maximal simplices in the triangulation.
We note that each linear extension of $e(\Delta_W(P,Q))$ corresponds to a maximal simplex in the triangulation.
In particular, we can prove Theorem \ref{volume} without Lemma \ref{volumelemma} and Proposition \ref{prop}.
\end{Remark}


We recall that
the volume of $\Gamma(\Cc(P),  \Cc(Q))$ equals that of $\Gamma(\Oc(P),  \Cc(Q))$, and
if $P$ and $Q$ have a common linear extension, 
then the volume of $\Gamma(\Cc(P),  \Cc(Q))$ equals that of $\Gamma(\Oc(P), \Oc(Q))$ (\cite{volOC}).
Hence by Theorem \ref{volume}, we obtain the following corollary.

\begin{Corollary}
	\label{oo}
	Let $(P,\leq_{P})$ and $(Q,\leq_{Q})$ be partially ordered sets on $P=\{p_1, \ldots, p_d\}$ and $Q = \{q_1, \ldots, q_d\}$.
	Then we have
	$$\textnormal{vol}(\Gamma(\Oc(P),  \Cc(Q)))=\sum\limits_{W \subset [d]}\cfrac{e(\Delta_W(P,Q))}{d!}.$$
	Moreover, if $P$ and $Q$ have a common linear extension, then we have
	$$\textnormal{vol}(\Gamma(\Oc(P),  \Oc(Q)))=\sum\limits_{W \subset [d]}\cfrac{e(\Delta_W(P,Q))}{d!}.$$
\end{Corollary}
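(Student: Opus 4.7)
The plan is to assemble the corollary by combining Theorem \ref{volume}, which was just proved, with the volume identities for the three families of twinned polytopes that are recorded in the introduction and attributed to \cite{volOC}. In other words, no new geometric analysis is needed; the corollary is a purely logical consequence of Theorem \ref{volume} together with the chain of equalities between the volumes of $\Gamma(\Cc(P),-\Cc(Q))$, $\Gamma(\Oc(P),-\Cc(Q))$, and (under the shared linear extension hypothesis) $\Gamma(\Oc(P),-\Oc(Q))$.

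Concretely, I would first invoke \cite{volOC} to write
\[
\textnormal{vol}(\Gamma(\Oc(P),-\Cc(Q)))=\textnormal{vol}(\Gamma(\Cc(P),-\Cc(Q))).
\]
Applying Theorem \ref{volume} to the right-hand side then yields the first formula of the corollary. For the second formula, I would assume that $P$ and $Q$ share a common linear extension and again cite \cite{volOC} to obtain
\[
\textnormal{vol}(\Gamma(\Oc(P),-\Oc(Q)))=\textnormal{vol}(\Gamma(\Cc(P),-\Cc(Q))),
\]
and then apply Theorem \ref{volume} once more.

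There is essentially no obstacle in this argument: the combinatorial decomposition into the pieces $\Cc'(\Delta_W(P,Q))$ has already been carried out in Proposition \ref{prop}, and the link between the order-polytope and chain-polytope versions is exactly what \cite{volOC} supplies. The only point worth checking, to keep the presentation clean, is that the shared linear extension hypothesis is needed solely for the $\Oc(P)$-versus-$\Oc(Q)$ statement and not for the mixed $\Oc(P)$-versus-$\Cc(Q)$ statement; this matches the way the two equalities are quoted in the introduction, so the proof can simply be written as a two-line deduction with the appropriate references.
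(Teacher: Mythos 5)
Your proposal is correct and matches the paper's own argument: the corollary is deduced exactly by combining the volume equalities from \cite{volOC} (with the shared linear extension hypothesis used only for the $\Gamma(\Oc(P),-\Oc(Q))$ case) with Theorem \ref{volume}. Nothing further is needed.
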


\begin{Remark}
By the proof of Proposition \ref{prop}, for any $W \subset [d]$, $\Gamma(\Cc(P),  \Cc(Q))_W$ is an integral convex polytope.
However, $\Gamma(\Oc(P),  \Oc(Q))_W$ and $\Gamma(\Oc(P),  \Cc(Q))_W$ are not always integral.
In fact, let $P=\{p_1,p_2\}$ be a $2$-element chain with $p_1 \leq_{P} p_2$
and $Q=\{q_1,q_2\}$ a $2$-element chain with $q_1 \leq_{Q} q_2$.
Then for $W=\{1\}$, we know that $\Gamma(\Oc(P),  \Oc(Q))_W$ and $\Gamma(\Oc(P),  \Cc(Q))_W$ are not integral convex polytopes.
This means that we can not prove Corollary \ref{oo} by means of a proof similar to that of  Theorem \ref{volume}.
\end{Remark}

We end this section with a few examples.
\begin{Example}
	\label{ex2}
	{\rm
	Let $(P,\leq_P)$ and $(Q,\leq_Q)$ be partially ordered sets with the Haase diagrams shown in Figure 1.
	\newline
	\begin{picture}(400,100)(-80,5)
	\put(50,90){$P$:}
	\put(90,70){\circle*{5}}
	\put(110,30){\circle*{5}}
	\put(70,30){\circle*{5}}
	\put(74,68){$p_1$}
	\put(94,28){$p_3$}
	\put(54,28){$p_2$}
	\put(110,30){\line(-1,2){20}}
	\put(70,30){\line(1,2){20}}
	
	\put(150,90){$Q$:}
	\put(190,70){\circle*{5}}
	\put(210,30){\circle*{5}}
	\put(170,30){\circle*{5}}
	\put(174,68){$q_1$}
	\put(194,28){$q_3$}
	\put(154,28){$q_2$}
	\put(210,30){\line(-1,2){20}}
	\put(170,30){\line(1,2){20}}
	\put(125,5){Figure 1}
	\end{picture}
	\\ 
	Then $\Gamma(\Cc(P), \Cc(Q))$ is centrally symmetric, i.e.,
	for each facet $\Fc$ of $\Gamma(\Cc(P),  \Cc(Q))$, $-\Fc$ is a facet of $\Gamma(\Cc(P),  \Cc(Q))$.
	For each subset $W$ of $\{1,2,3\}$, the Haase diagram of $\Delta_W(P,Q)$ is presented in Figure 2.
	\newline
	\begin{picture}(400,100)(20,20)
	\put(50,90){$\Delta_{\{1,2,3\}}(P,Q)$:}
	\put(90,70){\circle*{5}}
	\put(110,30){\circle*{5}}
	\put(70,30){\circle*{5}}
	\put(74,68){$p_1$}
	\put(94,28){$p_3$}
	\put(54,28){$p_2$}
	\put(110,30){\line(-1,2){20}}
	\put(70,30){\line(1,2){20}}
	
	\put(150,90){$\Delta_{\{1,2\}}(P,Q)$:}
	\put(190,70){\circle*{5}}
	\put(190,50){\circle*{5}}
	\put(190,30){\circle*{5}}
	\put(174,68){$q_3$}
	\put(174,48){$p_1$}
	\put(174,28){$p_2$}
	\put(190,30){\line(0,1){20}}
	\put(190,50){\line(0,1){20}}
	
	\put(250,90){$\Delta_{\{1,3\}}(P,Q)$:}
	\put(290,70){\circle*{5}}
	\put(290,50){\circle*{5}}
	\put(290,30){\circle*{5}}
	\put(274,68){$q_2$}
	\put(274,48){$p_1$}
	\put(274,28){$p_3$}
	\put(290,30){\line(0,1){20}}
	\put(290,50){\line(0,1){20}}
	
	\put(350,90){$\Delta_{\{2,3\}}(P,Q)$:}
	\put(390,70){\circle*{5}}
	\put(410,30){\circle*{5}}
	\put(370,30){\circle*{5}}
	\put(374,68){$q_1$}
	\put(394,28){$p_3$}
	\put(354,28){$p_2$}
	\put(410,30){\line(-1,2){20}}
	\put(370,30){\line(1,2){20}}
	
	\end{picture}
	\\ 
	\newline
	\begin{picture}(400,100)(20,0)
	\put(50,90){$\Delta_{\{1\}}(P,Q)$:}
	\put(90,30){\circle*{5}}
	\put(110,70){\circle*{5}}
	\put(70,70){\circle*{5}}
	\put(74,28){$p_1$}
	\put(94,68){$q_3$}
	\put(54,68){$q_2$}
	\put(90,30){\line(-1,2){20}}
	\put(90,30){\line(1,2){20}}
	
	\put(150,90){$\Delta_{\{2\}}(P,Q)$:}
	\put(190,70){\circle*{5}}
	\put(190,50){\circle*{5}}
	\put(190,30){\circle*{5}}
	\put(174,68){$q_1$}
	\put(174,48){$q_3$}
	\put(174,28){$p_2$}
	\put(190,30){\line(0,1){20}}
	\put(190,50){\line(0,1){20}}
	
	\put(250,90){$\Delta_{\{3\}}(P,Q)$:}
	\put(290,70){\circle*{5}}
	\put(290,50){\circle*{5}}
	\put(290,30){\circle*{5}}
	\put(274,68){$q_1$}
	\put(274,48){$q_2$}
	\put(274,28){$p_3$}
	\put(290,30){\line(0,1){20}}
	\put(290,50){\line(0,1){20}}
	
	\put(350,90){$\Delta_{\emptyset}(P,Q)$:}
	\put(390,70){\circle*{5}}
	\put(410,30){\circle*{5}}
	\put(370,30){\circle*{5}}
	\put(374,68){$q_1$}
	\put(394,28){$q_3$}
	\put(354,28){$q	_2$}
	\put(410,30){\line(-1,2){20}}
	\put(370,30){\line(1,2){20}}
	\put(220,5){Figure 2}
	\end{picture}
	\\
	Hence we have
	$$\textnormal{vol}(\Gamma(\Cc(P),  \Cc(Q)))=4\times\cfrac{1}{6}+4\times \cfrac{2}{6}=2.$$ 
}
\end{Example}

\begin{Example}
	\label{ex}
	{\rm
		Let $P=\{p_1,\ldots,p_d \}$ be a $d$-element antichain
		and $Q=\{q_1,\ldots,q_d \}$ a $d$-element chain with $q_1 \leq_Q \cdots \leq_Q q_d$.	
		For $W \subset [d]$, we will compute the volume of $\Cc'(\Delta_W(P,Q))$.
		We set $W=\{1,\ldots,k\}$.
		Then $P_W$ is a $k$-element antichain and $Q_{\overline{W}}$ is a $(d-k)$-element chain.
		Hence we have $$\Cc'(\Delta_W(P,Q))=\textnormal{conv}(\{[0,1]^k\times\{0\}^{d-k},-\eb_{k+1},\ldots,-\eb_d\})$$
		 and $\textnormal{vol}(\Cc'(\Delta_W(P,Q)))=k!/d!$.
		Therefore, we obtain
		\begin{displaymath}
		\begin{aligned}
		\textnormal{vol}(\Gamma(\Cc(P), \Cc(Q)))&=\sum\limits_{k=0}^{d}\binom{d}{k}\cfrac{k!}{d!}\\
		&=\sum\limits_{k=0}^{d}\cfrac{1}{k!}.
		\end{aligned}
		\end{displaymath}
		Moreover, by Corollary \ref{oo}, we have 
			$$	\textnormal{vol}(\Gamma(\Oc(P),  \Oc(Q)))=\textnormal{vol}(\Gamma(\Oc(P),  \Cc(Q)))=\sum\limits_{k=0}^{d}\cfrac{1}{k!}.$$
		
	For a positive integer $d$, we write $a(d)$ for the total number of arrangements of a $d$-element set.
	Then  we have	$$\textnormal{vol}(\Gamma(\Cc(P),  \Cc(Q)))=\cfrac{a(d)}{d!}.$$
}
\end{Example}

\section{facets and dual polytopes of twinned chain polytopes}
In this section, we will compute the facet-supporting hyperplanes and dual polytopes of twinned chain polytopes.
We begin by recalling these features for the chain polytopes which were originally studied in \cite{Stanley}.
Let $(P,\leq_P)$ be a partially ordered set on $P=\{p_1,\ldots,p_d\}$.
Then there are two types of the facet-supporting hyperplanes for the chain polytope $\Cc(P)$:
\begin{itemize}
	\item for each element $p_i$ of  $P$, $x_i = 0$,
	\item for each maximal chain $C$ of  $P$, $\sum_{p_i \in C}x_i = 1$.
\end{itemize}
We write $\Mc(P)$ for the set of maximal chains of $P$.
Then the number of facets of $\Cc(P)$ equals $|\Mc(P)|+d$.

The next lemma follows immediately from Lemma \ref{volumelemma}.
\begin{Lemma}
	\label{facet}
	Let $(P,\leq_{P})$ and $(Q,\leq_{Q})$ be partially ordered sets on $P=\{p_1, \ldots, p_d\}$ and $Q = \{q_1, \ldots, q_d\}$,
	 and let $W \subset [d]$.
	Then there are three types of the facet-supporting hyperplanes for $\Cc'(\Delta_W(P,Q)))$:
	\begin{itemize}
		\item for each element $p_i$ of $\Delta_W(P,Q)$, $x_i = 0$,
		\item for each element $q_j$ of  $\Delta_W(P,Q)$, $-x_j = 0$, 
		\item for each maximal chain $C$ of $\Delta_W(P,Q)$, $\sum_{p_i \in C}x_i-\sum_{q_j \in C}x_j = 1$.
	\end{itemize}
\end{Lemma}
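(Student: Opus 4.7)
The plan is to deduce this facet description directly from Stanley's known facet description of an ordinary chain polytope (recalled just above the statement), transported through the unimodular equivalence already constructed in the proof of Lemma \ref{volumelemma}.

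First I would recall from that proof the diagonal $\pm 1$ matrix $U$, whose $(i,i)$ entry equals $+1$ for $i \in W$ and $-1$ for $i \in \overline{W}$, and the identity $\Cc'(\Delta_W(P,Q)) = f_U(\Cc(R))$, where $R$ is the relabeled $d$-element poset introduced in Section 1. By construction $R$ is isomorphic as a poset to $\Delta_W(P,Q)$, so the maximal chains of $R$ are in bijection with those of $\Delta_W(P,Q)$: the chain of $R$ with index set $\{i_1,\dots,i_r\}$ corresponds to the chain of $\Delta_W(P,Q)$ whose $k$-th element is $p_{i_k}$ if $i_k \in W$ and $q_{i_k}$ if $i_k \in \overline{W}$.

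Next I would exploit the fact that $f_U$ is a linear isomorphism of $\RR^d$, so it preserves face lattices and sends supporting hyperplanes to supporting hyperplanes; it therefore suffices to push forward Stanley's description. Because $U = U^{-1} = U^{T}$ is involutive, the hyperplane $\{v : \langle v, a\rangle = c\}$ is carried by $f_U$ to $\{w : \langle w, Ua\rangle = c\}$. Applying this rule coordinatewise: the coordinate hyperplane $x_i = 0$ of $\Cc(R)$ maps to $x_i = 0$ when $i \in W$ (corresponding to $p_i \in \Delta_W(P,Q)$) and to $-x_i = 0$ when $i \in \overline{W}$ (corresponding to $q_i \in \Delta_W(P,Q)$), producing the first two types of facets. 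The chain inequality $\sum_{i \in C} x_i = 1$ of $\Cc(R)$ maps to $\sum_{i \in C \cap W} x_i - \sum_{i \in C \cap \overline{W}} x_i = 1$, which under the bijection of the previous paragraph is exactly $\sum_{p_i \in C} x_i - \sum_{q_j \in C} x_j = 1$ for the corresponding maximal chain $C$ of $\Delta_W(P,Q)$, producing the third type.

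There is no genuine obstacle here; the argument is a direct transport of Stanley's chain polytope facet description through an explicit unimodular transformation, and the $d + |\mathcal{M}(\Delta_W(P,Q))|$ facets of $\Cc'(\Delta_W(P,Q))$ correspond bijectively to those of $\Cc(R)$. The only care required is bookkeeping, namely distinguishing indices in $W$ from those in $\overline{W}$, verifying that $f_U$ acts on linear forms as claimed, and confirming that the bijection between maximal chains of $R$ and of $\Delta_W(P,Q)$ is realised by the same index sets so that the third family of hyperplanes is indexed as stated.
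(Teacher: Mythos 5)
Your proof is correct and follows essentially the same route as the paper: the paper's proof of this lemma is the one-line remark that it follows immediately from the proof of Lemma \ref{volumelemma}, i.e., by transporting Stanley's facet description of $\Cc(R)$ through the diagonal $\pm 1$ unimodular map $f_U$, which is exactly the transport you carry out in detail.
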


In this section, we characterize  the facet-supporting hyperplanes of $\Gamma(\Cc(P), \Cc(Q))$ in terms of partially ordered sets $(P,\leq_{P})$ and $(Q,\leq_{Q})$.
Namely, we prove the following theorem.
\begin{Theorem}
	\label{facetth}
			Let $(P,\leq_{P})$ and $(Q,\leq_{Q})$ be partially ordered sets on $P=\{p_1, \ldots, p_d\}$ and $Q = \{q_1, \ldots, q_d\}$.
		The facet-supporting hyperplanes for $\Gamma(\Cc(P),  \Cc(Q))$ are given as
			$$\sum_{p_i \in C}x_i-\sum_{q_j \in C}x_j = 1$$
		for each  $W \subset [d]$ and for each maximal chain $C$ of  $\Delta_W(P,Q)$.		
		Moreover, the number of facets of $\Gamma(\Cc(P),  \Cc(Q))$ equals $|\bigcup_{W \subset [d]}\Mc(\Delta_W(P,Q))|$.
\end{Theorem}
\begin{proof}
	We let $W$ be a subset of $[d]$ and let $C$ be a maximal chain of $\Delta_W(P,Q)$.
	Then by Lemma \ref{facet}, $\Fc_C=\Hc_C \cap \Cc'(\Delta_W(P,Q))$ is a facet of $\Cc'(\Delta_W(P,Q))$ 
	, where $\Hc_C$ is the hyperplane 
	$$\{(x_1,\ldots,x_d) \in \RR^d \mid \sum_{p_i \in C}x_i-\sum_{q_j \in C}x_j = 1\}$$
	in $\RR^d$.
	We let $\yb=(y_1,\ldots,y_d)$ be an interior point of $\Fc_C$.
	Then by Lemma \ref{facet}, we know $y_i > 0$ if $i \in W$ and $y_j <0$ if $j \in \overline{W}$.
	Hence for any $W' \subset [d]$ with $W \neq W'$, we have $\yb \notin \Cc'(\Delta_{W'}(P,Q))$.
	Therefore, it follows that $\yb$ does not belong to the interior of $\Gamma(\Cc(P), \Cc(Q))$.
	By Proposition \ref{prop}, $\Hc_C \cap \Gamma(\Cc(P), \Cc(Q))$ is a facet of $\Gamma(\Cc(P),  \Cc(Q))$.
	
	Since $\Gamma(\Cc(P),  \Cc(Q))$ is Gorenstein Fano, 
	the supporting hyperplane of each facet of $\Gamma(\Cc(P), \Cc(Q))$ is of the form
	$a_1 x_1+\cdots+a_d x_d=1$
	with each $a_i \in \ZZ$.
	Hence the facet-supporting hyperplanes for $\Gamma(\Cc(P),  \Cc(Q))$ are given as
	$$\sum_{p_i \in C}x_i-\sum_{q_j \in C}x_j = 1$$
	for each  $W \subset [d]$ and for each maximal chain $C$ of  $\Delta_W(P,Q)$,
		as desired.
\end{proof}

\begin{Remark}
	For some partially ordered sets $(P,\leq_P)$ and $(Q,\leq_Q)$ with $|P|=|Q|=d$, we have $$\sum_{W \subset [d]}|\Mc(\Delta_W(P,Q))| \neq |\bigcup_{W \subset [d]}\Mc(\Delta_W(P,Q))|.$$
	For instance, let $P=\{p_1,p_2,p_3\}$ and $Q=\{q_1,q_2,q_3\}$ be $3$-element antichains.
	For $W_1=\{1\}$, $C_1=\{p_1,q_3\}$ is a maximal chain of $\Delta_{W_1}(P,Q)$.
	Then for $W_2=\{1,2\}$, $C_1$ is also a maximal chain of $\Delta_{W_2}(P,Q)$.
	Hence we have $$\sum_{W \subset [d]}|\Mc(\Delta_W(P,Q))| > |\bigcup_{W \subset [d]}\Mc(\Delta_W(P,Q))|.$$	
\end{Remark}

Let $\Pc \subset \RR^d$ be a Gorenstein Fano polytope of dimension $d$. 
	Then a point $\ab \in \RR^d$ is a vertex of $\Pc^\vee$ if 
	and only if $\Hc \cap \Pc$ is a facet of $\Pc$,
	where $\Hc$ is the hyperplane
	$$\left\{ \xb \in \RR^d \ | \ \langle \ab, \xb \rangle =1 \right\}$$
	in $\RR^d$ (\cite[Corollary 35.6]{HibiRedBook}).
Hence by Theorem \ref{facet}, we obtain the following Corollary.

\begin{Corollary}
	\label{cor}
			Let $(P,\leq_{P})$ and $(Q,\leq_{Q})$ be partially ordered sets on $P=\{p_1, \ldots, p_d\}$ and $Q = \{q_1, \ldots, q_d\}$.
		Then we have $$V(\Gamma(\Cc(P), \Cc(Q))^\vee)=\bigcup_{W \subset [d]}\{\rho'(C) \in \RR^d \mid C \in \Mc(\Delta_W(P,Q)) \}.$$
	Namely, 
	$$\Gamma(\Cc(P), \Cc(Q))^\vee=\textnormal{conv}\left(\bigcup_{W \subset [d]}\{\rho'(C) \in \RR^d \mid C \in \Mc(\Delta_W(P,Q))\}\right).$$
\end{Corollary}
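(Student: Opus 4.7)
The plan is to combine Theorem~\ref{facetth} with the duality principle cited from \cite[Corollary~35.6]{HibiRedBook}, which identifies vertices of the dual of a Gorenstein Fano polytope with coefficient vectors of its facet-defining hyperplanes. Since Proposition~\ref{prop} already tells us that $\Gamma(\Cc(P),-\Cc(Q))$ is a Gorenstein Fano polytope of dimension $d$, the cited duality result applies: a point $a \in \RR^d$ is a vertex of $\Gamma(\Cc(P),-\Cc(Q))^\vee$ if and only if the hyperplane $\{x \in \RR^d : \langle a,x\rangle = 1\}$ cuts out a facet of $\Gamma(\Cc(P),-\Cc(Q))$.

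First, I would recall the characterization given by Theorem~\ref{facetth}: every facet of $\Gamma(\Cc(P),-\Cc(Q))$ is of the form $\Hc_{W,C} \cap \Gamma(\Cc(P),-\Cc(Q))$, where $W \subset [d]$, $C$ is a maximal chain of $\Delta_W(P,Q)$, and
\[
\Hc_{W,C} = \Bigl\{ x \in \RR^d \;\Big|\; \sum_{p_i \in C} x_i - \sum_{q_j \in C} x_j = 1 \Bigr\}.
\]
Next, I would simply observe that the coefficient vector of this supporting hyperplane is exactly
\[
\sum_{p_i \in C} \eb_i - \sum_{q_j \in C} \eb_j = \rho'(C),
\]
by the definition of $\rho'$ introduced before Lemma~\ref{volumelemma}. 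Thus the correspondence from the cited duality theorem matches each facet of $\Gamma(\Cc(P),-\Cc(Q))$ with the vertex $\rho'(C)$ of its dual polytope.

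Putting these steps together, the vertex set of $\Gamma(\Cc(P),-\Cc(Q))^\vee$ is exactly
\[
\bigcup_{W \subset [d]} \{ \rho'(C) \mid C \in \Mc(\Delta_W(P,Q)) \},
\]
and therefore $\Gamma(\Cc(P),-\Cc(Q))^\vee$ is the convex hull of this set, which is the asserted description. There is no real obstacle: the whole argument is a one-line translation once Theorem~\ref{facetth} is in hand, the only thing to verify being the routine identification of the facet-defining linear form $\sum_{p_i \in C}x_i - \sum_{q_j \in C}x_j$ with the vector $\rho'(C)$.
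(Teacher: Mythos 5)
Your proof is correct and follows essentially the same route as the paper: apply the duality criterion of \cite[Corollary 35.6]{HibiRedBook} to the facet description in Theorem~\ref{facetth} and identify the facet-defining linear form $\sum_{p_i \in C}x_i-\sum_{q_j \in C}x_j$ with $\rho'(C)$. (One minor slip: the Gorenstein Fano property of $\Gamma(\Cc(P),-\Cc(Q))$ comes from the cited results \cite{orderchain,harmony}, not from Proposition~\ref{prop}, but this does not affect the argument.)
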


We end this section with a pair of examples demonstrating Theorem \ref{facetth} and Corollary \ref{cor}.
\begin{Example}
	{\rm
		Let $(P,\leq_P)$ and $(Q,\leq_Q)$ be partially ordered sets in Example \ref{ex}.
		We fix $W=\{i_1,\ldots,i_k\} \subset [d]$.
		Then we have
		$$\Mc(\Delta_W(P,Q))=\{ \{p_{i_s},q_{i_{k+1}},\ldots,q_{i_d}\} \mid 1 \leq s \leq k \}$$
		and $|\Mc(\Delta_W(P,Q))|=k$.
		Hence 
		\begin{displaymath}
		\begin{aligned}
		|\bigcup_{W \subset [d]}\Mc(\Delta_W(P,Q))|&=\sum\limits_{k=1}^{d}\binom{d}{k}k+1\\
		&=d\cdot 2^{d-1}+1.
		\end{aligned}
		\end{displaymath}
	}
\end{Example}

\begin{Example}
	{\rm
		Let $(P,\leq_P)$ and $(Q,\leq_Q)$ be partially ordered sets in Example \ref{ex2}.
	Then by Corollary \ref{cor}, the vertices of $\Gamma(\Cc(P),  \Cc(Q))^\vee$ are the following:
	$$\pm(1,1,0),\pm(1,0,1),\pm(1,-1,0),\pm(1,1,-1),\pm(1,-1,1),\pm(1,0,-1).$$
    Moreover, there do not exist partially ordered sets $(P',\leq_{P'})$ and $(Q',\leq_{Q'})$ with $|P'|=|Q'|=3$ such that $\Gamma(\Cc(P),  \Cc(Q))^\vee$ and $\Gamma(\Cc(P'), \Cc(Q'))$ are unimodularly equivalent.
    Indeed, since $\Gamma(\Cc(P),  \Cc(Q))^\vee$ is centrally symmetric and the number of its vertices equals $12$,
    each of $P'$ and $Q'$ needs to have just $7$ antichains.
    However, there exists no $3$-element partially ordered set which has just $7$ antichains. 
}
\end{Example}

\begin{acknowledgement}
	The author would like to thank anonymous referees for reading the manuscript carefully.
\end{acknowledgement}

\end{document}